\newtheorem{thm}{Theorem}[section]
\newtheorem{prop}[thm]{Proposition}
\newtheorem{lem}[thm]{Lemma}
\newtheorem{ex}[thm]{Example}
\newtheorem{defn}[thm]{Definition}
\def\R{\mathbb{R}}
\def\N{\mathbb{N}}
\def\E{\mathbb{E}}
\def\P{\mathbb{P}}
\def\I{\infty}
\newcommand{\be}{\begin{equation}}
\newcommand{\ee}{\end{equation}}
\newcommand{\bea}{\begin{eqnarray}}
\newcommand{\eea}{\end{eqnarray}}
\newcommand{\beann}{\begin{eqnarray*}}
\newcommand{\eeann}{\end{eqnarray*}}
\newcommand{\benn}{\begin{equation*}}
\newcommand{\eenn}{\end{equation*}}
\def\ra{\rightarrow}
\def\I{\infty}
\def\Id{{\textnormal{Id}}}
\newcommand{\cB}{{\mathcal B}}  
\newcommand{\cC}{{\mathcal C}}  
\newcommand{\cE}{{\mathcal E}}  
\newcommand{\cG}{{\mathcal G}}  
\newcommand{\cH}{{\mathcal H}}  
\newcommand{\cI}{{\mathcal I}}  
\newcommand{\cJ}{{\mathcal J}}  
\newcommand{\cK}{{\mathcal K}}  
\newcommand{\cM}{{\mathcal M}}  
\newcommand{\cO}{{\mathcal O}}  
\newcommand{\cT}{{\mathcal T}}  
\newcommand{\cV}{{\mathcal V}}  
\newcommand{\cX}{{\mathcal X}}  
\newcommand{\cY}{{\mathcal Y}}  
\newcommand{\cZ}{{\mathcal Z}}  
\def\txtd{{\textnormal{d}}}
\def\txte{{\textnormal{e}}}
\begin{document}

\author{Christian Kuehn\thanks{Technical University of Munich, Faculty of Mathematics,
Boltzmannstr.~3, 85748 Garching b.~M{\"u}nchen, Germany; e-mail: ckuehn@ma.tum.de}
\thanks{External Faculty, Complexity Science Hub Vienna, Josefst\"adterstrasse 39, A-1090 Vienna, Austria}}
 
\title{Multiscale Dynamics of an Adaptive Catalytic Network}

\maketitle

\begin{abstract}
We study the multiscale structure of the Jain-Krishna adaptive network model. This model describes the co-evolution of a set of continuous-time autocatalytic ordinary differential equations and its underlying discrete-time graph structure. The graph dynamics is governed by deletion of vertices with asymptotically weak concentrations of prevalence and then re-insertion of vertices with new random connections. In this work we prove several results about convergence of the continuous-time dynamics to equilibrium points. Furthermore, we motivate via formal asymptotic calculations several conjectures regarding the discrete-time graph updates. In summary, our results clearly show that there are several time scales in the problem depending upon system parameters, and that analysis can be carried out in certain singular limits. This shows that for the Jain-Krishna model, and potentially many other adaptive network models, a mixture of deterministic and/or stochastic multiscale methods is a good approach to work towards a rigorous mathematical analysis.      
\end{abstract}

{\bf Keywords:} adaptive network, co-evolutionary network, autocatalytic reaction,
Jain-Krishna model, network dynamics, multiple time scale system, pre-biotic evolution, 
random graph.\\

\section{Introduction}

Catalytic reactions have been studied in a wide variety of 
contexts, classically in chemistry~\cite{LindstroemPettersson}, but more recently
across mathematics, physics and the life sciences. A particular area, where 
catalytic reactions have been employed is the origins of life problem, i.e., how
to form biology out of pre-biotic systems. One proposed mechanism is that various
molecules can catalyze each other to form a self-sustaining and self-organized 
reaction network, which is then able to form more complex structures. This paradigm
is linked to the notions of autocatalytic sets~\cite{Kauffman1,Kauffman,HordijkSteel}
as well as to hypercycles~\cite{Eigen,EigenSchuster}, which have both become 
standard ideas in complex systems~\cite{Gros}. 

A new twist has been added to this line of research by more recent models in
network science. In particular, autocatalytic reaction mechanisms have been paired
with the idea of adaptive (or co-evolutionary) networks~\cite{GrossSayama}, where
the dynamics \emph{of} the network is fully coupled to the dynamics \emph{on} 
the network. For example, adaptive networks have been
used as epidemic models~\cite{GrossDLimaBlasius,ShawSchwartz,HorstmeyerKuehnThurner}, for 
evolutionary games on networks \cite{PachecoTraulsenNowak,KuehnZschalerGross}, 
and for modelling self-organized criticality~\cite{BornholdtRohlf,KuehnNetworks}. 

In this work, we study an adaptive network model proposed by Jain and 
Krishna~\cite{JainKrishna,JainKrishna1,JainKrishna2,JainKrishna3}.
The Jain-Krishna model builds upon two well-established mathematical ideas: ordinary
differential equations (ODEs) for autocatalytic systems, and network structure 
formation models via random graph theory. Each vertex/node in the model is part of a graph/network 
on which ODE dynamics takes place. Each vertex has an associated concentration, which is
the variable for its ODE, which is coupled to the ODE of other vertices via the 
graph structure. The idea
is that once the concentrations in the ODEs have reached a suitable asymptotic limit 
set in forward time, we update the graph structure by deleting the vertex with the 
lowest asymptotic concentration and insert a new vertex with random connections to
the rest of the graph; see also Section~\ref{sec:JKmodel} for a precise definition of 
the model.

It seems natural to conjecture that \emph{multiscale analysis}
is going to play a key role for the Jain-Krishna model, and in fact almost all other
adaptive network models. For example, one may ask whether there is a natural time scale 
separation between the process \emph{of the network} relative to the dynamics \emph{on
the network}. In fact, there are even further time scales, such as the convergence
rate to invariant sets for the ODEs, or the time scale to reach a structurally different
graph via adaptation. If we can identify the different time scales in the
problem, then this would open up the entire methodology of multiple time scale 
dynamical systems methods such as geometric singular perturbation 
theory~\cite{Fenichel4,Jones,Tikhonov}; see also~\cite{KuehnBook} for a more detailed
background on multiple time scale methods. However, even if we have identified time
scales, the multiscale paradigm suggests to separate the scales first in suitable 
singular limits, i.e., fast dynamics for frozen slow variables or slow dynamics
for constrained/averaged fast variables. The problem only simplifies if these
singular limits are mathematically tractable. In this work, we make the following 
progress towards this multiscale program for the Jain-Krishna model:

\begin{itemize}
 \item[(R1)] We provide a proof that the autocatalytic ODEs have equilibrium points 
and solutions generically converge to an equilibrium exponentially fast. We also algebraically
characterize the sets of equilibrium points. 
 \item[(R2)] In the context of the ODE proof, we uncover a relation to projective spaces.
We also provide an intertwining lemma linking network topology to convergence 
structure.
 \item[(R3)] For the network adaptation rule, we formulate four conjectures\footnote{Although 
it may be evident to almost all readers, the author
would be very interested to see any proof of these conjectures, or counter-examples with 
improved conjectures/theorems in future works.} regarding the
relevant time scales based upon network size $d$ and the edge probability parameter $p$. 
In particular, we study the formation time of the first cycle as well as the formation time 
of a single autocatalytic set (ACS) encompassing the entire graph.
 \item[(R4)] Although we do not prove the four conjectures from (R3), we provide 
heuristic/formal asymptotic calculations to motivate the intrinsic scalings appearing in all 
four cases.   
\end{itemize}

In summary, we have advanced the understanding of the Jain-Krishna model as our results
clearly show a multiple time scale structure based upon the ODE convergence
time as well as the graph structure formation time for various ranges of $p$ and $d$. 
Furthermore, (R1)-(R2) show that the singular limit for a frozen graph structure is 
tractable. The formal calculations in (R3)-(R4) indicate that the graph adaptation process 
can be analyzed as well. However, to provide a full singular perturbation analysis in the
finite time-scale separation regime is beyond our approach here and remains a challenge for
future work. Numerical simulations strongly indicate~\cite{JainKrishna,JainKrishna1,JainKrishna2,
JainKrishna3} that it is possible to study the finite time-scale case. In fact, the numerical 
results show on a coarse-grained level typical fast-jump, slow-drift, relaxation-oscillation, 
or bursting structures commonly encountered for multiple time scale problems. These observations
were actually a main motivation to understand the apparent visual link to multiscale dynamics 
in more detail.\medskip  

The structure of the paper is as follows: In Section~\ref{sec:JKmodel} we define the Jain-Krishna
model; we also point out an ill-defined variant of it in Appendix~\ref{ap:bug}, which has been
used in the literature. In Section~\ref{sec:cts}
we study the continuous-time dynamics on the network given by a system of autocatalytic-reaction 
ODEs and prove results (R1)-(R2) regarding existence and stability of equilibrium points. In
Section~\ref{sec:dis} we consider the discrete-time dynamics of the network. We identify two
different regimes based upon giant-component existence or non-existence. Then we split these
two cases into (a) the first cycle formation problem, which is linked back to classical literature
on random graph dynamics, and (b) the formation problem of large autocatalytic sets. Based upon
these ideas we provide approximate and asymptotic calculations to motivate four conjectures on 
the discrete-time Jain-Krishna model graph update rule.\medskip

\textit{Acknowledgments: I would like to thank two anonymous referees, who helped to 
improve the presentation of the results. I also would like to thank the VolkswagenStiftung 
for support via a Lichtenberg Professorship. Furthermore, I acknowledge general interesting
discussions on adaptive networks with Leonhard Horstmeyer within a joint project
of the Austrian Science Foundation (FWF, Project No. P29252) as well as the support
of the Complexity Science Hub Vienna (CSH) by funding the workshop ``Adaptive 
Networks'' November 2-3 in 2017, which provided additional motivation for me to 
undertake the research presented here.}  

\section{The Jain-Krishna Model}
\label{sec:JKmodel}

The version of the Jain-Krishna model we follow can be found 
in~\cite{JainKrishna2}. Fix a dimension $d\in\N$, $d\geq 2$, representing 
the number of species in the reaction pool. Let $C=C[s]=(c_{ij}[s])=(c_{ij})$ 
for $s\in\N_0$ be a $d\times d$ matrix of interaction coefficients, where $s$ 
represents a discrete time step, which we shall, as indicated above, sometimes 
drop from the notation. We can view $C$ as the transpose of the adjacency matrix of 
a directed graph $\cG=\cG(\cV,\cE)$ with vertex set 
$\cV=\{1,2,\ldots,d\}$ and edge set $\cE=\{(i,j):c_{ij}=1\}$, i.e., 
if there is an edge from $j$ to $i$ then $c_{ij}=1$ and $c_{ij}=0$ 
otherwise\footnote{Classically, one would work with the transpose $C^\top$ as 
it is the classical adjacency matrix but to keep with the standard conventions for the 
Jain-Krishna model, we use $C$.}; self-loops are not allowed 
so that $c_{jj}=0$ for all $j\in\{1,2,\ldots,d\}$. Whenever we consider
a subgraph of $\cG'=\cG'(\cV',\cE')\subset \cG$, we not only require
$\cV'\subseteq \cV$ and $\cE'\subseteq\cE$ but consider induced subgraphs,
where start- and end-points of edges are contained in the subgraph. We 
denote the set of all possible allowed matrices $C$ as
\benn
\cM:=\left\{M=(m_{ij})\in\R^{d\times d}:m_{ij}\in \{0,1\},~
m_{jj}=0~\forall i,j\right\}.
\eenn
Let $t\in\R$ be a continuous time and consider the following ODEs 
for the values $x_j=x_j(t)$ at each vertex
\be
\label{eq:JK}
\frac{\txtd x_j}{\txtd t}=:
x_j'=(Cx)_j-x_j\sum_{k=1}^d (Cx)_k,\qquad j\in\{1,2,\ldots,d\},
\ee
with initial condition $x(0)$ and we set $x=(x_1,x_2,\ldots,x_d)^\top$; 
here $(\cdot)^\top$ denotes the transpose so we work with column vectors. 
We impose the two constraints
\be
\label{eq:JKc}
\sum_{k=1}^d x_j=1 \qquad \text{and}\qquad x_k\geq 0\quad \forall k,
\ee
which we refer to as mass and non-negativity conservation.
Let $\cX$ be the subset of $\R^d$ containing all $x\in\R^d$ for 
which~\eqref{eq:JKc} holds. We shall prove in 
Proposition~\ref{prop:invariant} that if the initial condition $x(0)$ 
satisfies~\eqref{eq:JKc}, then $x(t)$ satisfies~\eqref{eq:JKc} as well 
for any $t>0$. Note that~\eqref{eq:JK} can be written more compactly as
\be
\label{eq:JKvec}
x'=Cx-|Cx|_1x=:f(x),\qquad |x|_1:=\sum_{j=1}^d x_j,
\ee
which shall not lead to any confusion with the usual $1$-norm as 
all components of $x$ will always be non-negative. We shall prove
in Theorems~\ref{thm:convtosteady1} and \ref{thm:convpart2} that~\eqref{eq:JK}, 
for \emph{fixed} $C$, always converges for sufficiently generic initial 
data to an equilibrium point $x_*$ as $t\ra +\I$. 

As yet, the network $C$ is static. The model is 
turned into a fully adaptive (or co-evolutionary) network as follows. 
The first matrix $C[0]$ is sampled as an Erd\"os-Renyi-type
random graph, i.e., 
\be
\label{eq:sample}
\P(c_{ij}= 1)=p\qquad\text{and} \qquad \P(c_{ij}= 0)=1-p 
\ee
for $i\neq j$ and a fixed parameter $p\in(0,1)$. Assume that we converged 
to an equilibrium point 
$x_*$ for~\eqref{eq:JK} for $C=C[0]$. Then we define the following 
set of indices
\benn
\cJ_*:=\left\{j\in\{1,2,\ldots,d\}:x_j=\min_k x_k\right\},
\eenn 
which are the species with minimum prevalence. Next, pick some 
$j_*$ at random with equal probability from $\cJ_*$ and re-sample 
$c_{ij_*}$ and $c_{j_*i}$ for $i\neq j$ according to~\eqref{eq:sample}
keeping all the other entries fixed. This step corresponds to 
eliminating one of the species that has performed worst in the 
autocatalytic reaction process~\eqref{eq:JK}. This yields a new matrix 
$C=C[1]$. Now we can repeat the process, including a new equilibrium 
point $x_*=x_*[1]$. The adaptive network dynamical
system can formally be written as a mapping
\benn
\phi:\N_0\times \cX\times \cM\ra \cX\times \cM,\qquad 
\phi(s,x_*[0],C[0])=(x_*[s],C[s]). 
\eenn
It is already an interesting question under which assumptions/modifications
the process just described generates a random dynamical system in the
sense of~\cite{ArnoldSDE}. This question will be considered in future
work. Here we focus on the two basic components, the static ODE dynamics
and the graph structure. These components should be viewed as singular
limits of the fully adaptive network, which is usually simulated using
a finite-time scale for the ODE dynamics at each vertex.

\section{Results for the Continuous-Time Vertex Dynamics}
\label{sec:cts}

In this section, we study the ODEs~\eqref{eq:JK}-\eqref{eq:JKc}. Our main 
goal is to provide a detailed analysis\footnote{Good physical intuition and numerical
calculations exist already for the ODEs~\cite{JainKrishna,JainKrishna1,JainKrishna2,JainKrishna3}; 
however, it seems useful to have a detailed analysis of the mathematical structures
behind the ODE convergence.} of all possible cases, which can 
occur depending upon a fixed and given matrix $C$.

\subsection{Well-Posedness \& Consistency}

The next result shows that $\cX$ is positively invariant so that the two 
constraints are always satisfied.

\begin{prop}
\label{prop:invariant}
Suppose $x(0)\in\cX$, then $x(t)\in\cX$ for all $t>0$.
\end{prop}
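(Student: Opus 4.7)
The plan is to verify the two invariants in~\eqref{eq:JKc} separately. Since the right-hand side of~\eqref{eq:JKvec} is a polynomial vector field on $\R^d$, local existence and uniqueness of solutions is immediate from the Picard-Lindel\"of theorem, so the only task is to check that the flow cannot leave $\cX$ during the existence interval.

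For the mass constraint I would sum~\eqref{eq:JK} over $j$ directly:
\begin{equation*}
\frac{\txtd}{\txtd t}|x|_1 \;=\; \sum_{j=1}^d (Cx)_j \;-\; |Cx|_1\sum_{j=1}^d x_j \;=\; |Cx|_1\bigl(1 - |x|_1\bigr).
\end{equation*}
Along any solution, $g(t):=|Cx(t)|_1$ is a continuous scalar function of $t$, so $u(t):=|x(t)|_1 - 1$ satisfies the scalar linear ODE $u' = -g(t)u$ with $u(0) = 0$; uniqueness (or the explicit exponential formula) forces $u\equiv 0$, i.e., $|x(t)|_1 = 1$ throughout.

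For non-negativity I would apply Nagumo's tangent-cone criterion to the non-negative orthant $\R_{\geq 0}^d$. At any boundary point $x$ with $J:=\{j:x_j=0\}\neq\emptyset$, the inward tangent cone consists of vectors $v$ satisfying $v_j\geq 0$ for $j\in J$. Since $C$ has non-negative entries, $Cx\geq 0$ componentwise whenever $x\geq 0$; in particular, for $j\in J$ the $j$-th component of the vector field reduces to $f_j(x) = (Cx)_j\geq 0$, so the tangent condition holds on every boundary face. This gives positive invariance of the orthant, and intersecting with the invariant hyperplane $\{|x|_1 = 1\}$ yields positive invariance of $\cX$. Global forward existence then follows automatically because $\cX$ is compact.

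The one genuine subtlety is the degenerate boundary configuration in which both $x_j = 0$ and $(Cx)_j = 0$ hold simultaneously, so that the naive ``vector field strictly points inward'' intuition fails; this is exactly the case that Nagumo's criterion is designed to handle. As a self-contained alternative one can use the integrating-factor identity
\begin{equation*}
\frac{\txtd}{\txtd t}\Bigl(x_j(t)\,e^{\int_0^t |Cx(s)|_1\,\txtd s}\Bigr) \;=\; (Cx(t))_j\,e^{\int_0^t |Cx(s)|_1\,\txtd s},
\end{equation*}
observe that the right-hand side is non-negative on any interval where $x\geq 0$, and then close the argument by a standard continuation/contradiction at the earliest candidate sign-change time.
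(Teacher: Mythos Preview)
Your proof is correct and follows essentially the same route as the paper: both arguments differentiate $|x|_1$ to obtain the factor $(1-|x|_1)|Cx|_1$ for mass conservation, and both check that on each boundary face $\{x_j=0\}$ the $j$-th component of the vector field reduces to $(Cx)_j\geq 0$. Your version is somewhat more careful about the rigorous details---invoking uniqueness for the scalar mass ODE, naming Nagumo's criterion to handle the tangential case, and noting global existence from compactness---whereas the paper records the same observations more informally.
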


\begin{proof}
Regarding mass conservation, we compute
\benn
\frac{\txtd }{\txtd t}\left(\sum_{j=1}^d x_j\right)=
\sum_{j=1}^d (Cx)_j-\sum_{j=1}^dx_j~\sum_{k=1}^d (Cx)_k=
\left(1-\sum_{j=1}^dx_j\right)\sum_{j=1}^d (Cx)_j
\eenn
and mass is indeed conserved if $\sum_{j=1}^dx_j(0)=1$. For 
non-negativity, we decompose $\partial\cX$ and define 
$\partial\cX_j:=\partial\cX\cap \{x\in\R^d:x_j=0\}$. Suppose $x\in 
\partial\cX_j$, then the $j$-th component of the 
ODEs~\eqref{eq:JK} is given by
\benn
x_j'=(Cx)_j\geq 0,
\eenn
where the last inequality follows as we start inside $\cX$ 
and since $C$ has non-negative entries. Since $j$ was arbitrary
and vector field either vanishes or points inside on 
$\partial\cX$, the non-negativity conservation follows.
\end{proof}

We remark that there are variants of the Jain-Krishna model in the 
literature, which are not well-posed as shown in Appendix~\ref{ap:bug}.
Hence, one has to be very careful, which variant of the model is 
discussed in various sources. 

\subsection{Existence of Equilibria} 
\label{ssec:steadystates}

Let $0:=(0,0,\ldots,0)^\top$ and observe that although $f(0)=0$, the origin 
is not an equilibrium point for~\eqref{eq:JK}-\eqref{eq:JKc} as $0\not\in \cX$. 
We briefly recall that for a general matrix $A=(a_{ij})\in\R^{m\times n}$ one 
calls $A$ positive and writes $A>0$ if $a_{ij}>0$ for all $i,j$. Similar 
component-wise definitions apply to non-negative matrices $A\geq 0$ and 
naturally specialize to the vector case $n=1$. The existence of a 
non-negative and non-zero equilibrium point $x_*$, such that 
\be
\label{eq:ssinX}
f(x_*)=0,\qquad \sum_{j=1}^d(x_*)_j=1, \qquad x_*\not\equiv 0, 
\qquad x_*\geq 0,
\ee
is already an interesting problem. Before analyzing it, recall that
a matrix $A$ is irreducible if and only if there exists a permutation
matrix $P$ such that $P^\top AP$ is block-upper triangular; if $A$ is 
not irreducible, it is called reducible. Furthermore,
viewing $A$ as the adjacency matrix of a directed graph $\cH$, then one
can easily prove that $A$ is irreducible if and only if $\cH$ is strongly
connected, i.e., if in $\cH$ there is a path from any vertex to any 
other vertex. The next two examples illustrate one of the major obstacles
to determine $x_*$.

\begin{ex}
\label{ex:base}
Let $d=3$ and consider the matrix $C$ with $c_{ij}=1$ for 
$(i,j)=(2,1),(1,2),(1,3)$ and $c_{ij}=0$ otherwise. Then one checks
that $x_*=(\frac12,\frac12,0)^\top$ is an equilibrium point. Note that $C$ 
is reducible but the subgraph formed by vertices $1$ and $2$ is irreducible.
\end{ex}

The last example suggests that we might want to use the Perron-Frobenius 
Theorem. Recall that there are several versions. The strong 
Perron-Frobenius states that for a matrix $A> 0$ there exists an 
eigenvalue $\lambda=\lambda(A)>0$ of algebraic multiplicity $1$ such that 
$\rho(A)=\lambda$, where $\rho(A)$ is the spectral radius of $A$. 
Furthermore, there exists a unique eigenvector $v$, called the Perron 
vector, such that 
\benn
Av=\lambda v,\qquad |v|_1=1,~v>0,
\eenn
and $v_1$ is the unique non-negative eigenvector up to positive 
multiples. Obviously, this version of the strong Perron-Frobenius Theorem 
is not applicable to the Jain-Krishna model as $C>0$ never holds. A 
more general version of the strong Perron-Frobenius Theorem states that 
the same conclusions apply if $A\geq 0$ and $A$ is irreducible.

\begin{ex}
\label{ex:base1}
Hence, applying the strong Perron-Frobenius Theorem in 
Example~\ref{ex:base} to the sub-graph/matrix formed by the first
two vertices, taking $((x_*)_1,(x_*)_2)=v_1^\top$ and augmenting the 
result by $(x_*)_3=0$ yields an equilibrium point. Yet, this naive
algorithm is flawed. Let $d=3$ and consider the matrix $C$ with 
$c_{ij}=1$ for $(i,j)=(2,1),(1,2),(3,1)$ and $c_{ij}=0$ 
otherwise so that we just reserved the edge between vertex $1$ 
and $3$ in comparison to Example~\ref{ex:base}. Then one checks
that $x_*=(\frac13,\frac13,\frac13)^\top$ is an equilibrium point and 
$x_*=(\frac12,\frac12,0)^\top$ is not an equilibrium point.
\end{ex}

Hence, applying the strong Perron-Frobenius Theorem naively to a 
reducible $C\geq 0$ or an irreducible sub-graph with appended zeros 
to obtain the existence of equilibrium points is not possible. The weak 
Perron-Frobenius Theorem states that if $A\geq 0$, then $\rho(A)$ is 
eigenvalue (not necessarily positive) and there exists an eigenvector 
$v\geq 0$, $v\not\equiv 0$.

\begin{thm}
\label{thm:exsteady}
Suppose $\cG$ has at least one edge, then there exists $x_*$
satisfying~\eqref{eq:ssinX}. 
\end{thm}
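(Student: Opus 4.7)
The equation $f(x_*)=0$ reads $Cx_*=|Cx_*|_1\,x_*$, so any candidate $x_*$ must be a non-negative eigenvector of $C$ whose eigenvalue equals the non-negative number $|Cx_*|_1$. The plan is to produce such an eigenvector via the weak Perron--Frobenius theorem (already recalled by the author), rescale it to lie in $\cX$, and separately handle the degenerate case where the spectral radius vanishes.

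I would first apply weak Perron--Frobenius to $C\geq 0$: there exists $v\geq 0$, $v\not\equiv 0$, with $Cv=\rho(C)\,v$ and $\rho(C)\geq 0$. If $\rho(C)>0$, set $x_*:=v/|v|_1\in\cX$; then $|Cx_*|_1=\rho(C)\,|x_*|_1=\rho(C)$, so $Cx_*=|Cx_*|_1\,x_*$ and $f(x_*)=0$, verifying~\eqref{eq:ssinX}. If instead $\rho(C)=0$, every complex eigenvalue of $C$ vanishes, the characteristic polynomial is $\lambda^d$, and Cayley--Hamilton forces $C^d=0$. Since $(C^k)_{ii}$ counts closed directed walks of length $k$ through vertex $i$, nilpotency of $C$ is equivalent to $\cG$ being a directed acyclic graph. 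Any finite DAG contains a sink vertex $j$ (no outgoing edges), so the $j$-th column of $C$ vanishes, $Ce_j=0$, and $x_*:=e_j\in\cX$ satisfies $f(x_*)=0$ trivially.

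The main subtlety, already flagged by Examples~\ref{ex:base} and~\ref{ex:base1}, is that one cannot invoke the strong Perron--Frobenius theorem directly: $C$ need not be irreducible, and the naive fix of restricting to an irreducible subgraph and padding with zeros does not always yield an equilibrium of the full system, as Example~\ref{ex:base1} demonstrates. Routing through the weak version together with an $\ell^1$-renormalization sidesteps this, and the only genuinely additional ingredient is the nilpotency--acyclicity correspondence used to dispatch the case $\rho(C)=0$; this is the step where I would expect a careful writeup to be required, since the strong Perron--Frobenius machinery no longer produces a usable eigenvector there.
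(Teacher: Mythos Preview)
Your proof is correct and follows the same core idea as the paper: apply the weak Perron--Frobenius theorem to $C\geq 0$, obtain a non-negative eigenvector $v\not\equiv 0$ with $Cv=\rho(C)v$, and normalize by $|v|_1$.

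The one difference is that your case split at $\rho(C)=0$ is unnecessary. The paper's computation works uniformly: with $x_*=v/|v|_1$ one has $Cx_*=\rho(C)x_*$ and $|Cx_*|_1=\rho(C)|x_*|_1=\rho(C)$ regardless of whether $\rho(C)$ is positive or zero, so $f(x_*)=Cx_*-|Cx_*|_1x_*=\rho(C)x_*-\rho(C)x_*=0$ in either case. Your DAG/sink argument is correct and gives a more explicit equilibrium (namely $e_j$ for a sink $j$) in the nilpotent case, but it is extra work rather than a genuine additional ingredient; the weak Perron--Frobenius eigenvector already does the job even when the spectral radius vanishes. So the ``careful writeup'' you anticipated for $\rho(C)=0$ is not actually required.
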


\begin{proof}
Applying the weak form of the Perron-Frobenius Theorem to $C$, we get
a non-zero and non-negative eigenvector $v$ with eigenvalue 
$\lambda=\rho(C)$. We define 
\benn
x_*=\frac{1}{|v|_1}v,
\eenn
so that $x_*\not\equiv 0$, $x_*\geq 0$, $|x_*|_1=1$. Then we compute
\benn
f(x_*)=Cx_*-x_*|Cx_*|_1=\frac{1}{|v|_1}Cv-\frac{1}{(|v|_1)^2}v|Cv|_1
=\frac{1}{|v|_1}\left(\lambda v-\lambda v \cdot 1\right)=0,
\eenn
which finishes the proof.
\end{proof}

Yet, the last result is not very constructive as we would like to know,
how many non-zero components equilibrium points really have. This information
is not provided by the weak form of Perron-Frobenius; 
cf.~Examples~\ref{ex:base}-\ref{ex:base1}. Furthermore, the dimension of
the set of equilibrium points is of interest as the next two examples show.

\begin{ex}
\label{ex:base2}
Let $d=4$ and consider the matrix $C$ with $c_{ij}=1$ for 
$(i,j)=(2,1)$, $(1,2)$, $(4,3)$, $(3,4)$ and $c_{ij}=0$ otherwise. Then 
$v_{1}=(\frac12,\frac12,0,0)^\top$ and $v_{2}=(0,0,\frac12,\frac12)^\top$ 
are Perron-Frobenius eigenvectors (PFEs) for the matrices with just $(i,j)=(2,1)$, 
$(1,2)$, respectively $(i,j)=(4,3)$, $(3,4)$ as non-zero indices splitting
the associated graph into its two $2$-cycles. One checks that 
$x_*=bv_{1}-(1-b)v_{2}$ is an equilibrium point for any $b\in[0,1]$.
\end{ex}

Therefore, we have shown that equilibrium points can be non-unique. Just using 
a general convex combination of Perron-Frobenius eigenvectors (PFEs) of irreducible 
subgraphs is also not enough.

\begin{ex}
\label{ex:ACSex1}
We use the same $C$ as in Example~\ref{ex:base2} with the added entry 
$c_{31}=1$. Then one checks that $x_*=bv_{1}-(1-b)v_{2}$ is an equilibrium point
only for $b=0$.
\end{ex}

\begin{defn}
An autocatalytic set is an (induced) subgraph $\cG'(\cV',\cE')\subset\cG$ 
such that for each $i\in \cV'$ there exists $j\neq i$ such that 
$(i,j)\in \cE'$.
\end{defn}

For completeness we record the next simple and well-known 
result~\cite{JainKrishna1} together with a short proof.

\begin{lem}
Let $C$ be the matrix associated to a directed graph $\cG$.
\begin{itemize}
\item[(L1)] We have the following implications
\be
\label{eq:relations}
\textnormal{$\cG$ is a cycle}\quad \Rightarrow \quad
\textnormal{$\cG$ is irreducible}\quad \Rightarrow \quad
\textnormal{$\cG$ is an ACS}. 
\ee
The converse implications are false. 
\item[(L2)] If $\cG$ has no cycle, then $\lambda(C)=0$. If $\cG$ has a
cycle, then $\lambda(C)\geq 1$.    
\item[(L3)] An ACS must contain a cycle. Furthermore, 
suppose $\lambda(C)\geq 1$ and let $v$ be the eigenvector associated 
to $\lambda(C)$. Then the subgraph with vertices $\{j:v_j>0\}$ is an ACS. 
\end{itemize}
\end{lem}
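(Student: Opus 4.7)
The plan is to handle (L1), (L2) and (L3) in turn, each time reducing to a standard fact about non-negative matrices or finite directed graphs.

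For (L1), I will use the standard equivalence between irreducibility of a non-negative matrix and strong connectivity of its associated directed graph (already alluded to just before Example~\ref{ex:base}). A directed cycle is visibly strongly connected, yielding the first implication. For the second, if $\cG$ is strongly connected and $|\cV|\geq 2$, every vertex $i$ has an incoming edge inside $\cG$: pick any $j\neq i$ in $\cV$; a directed path from $j$ to $i$ ends with some edge arriving at $i$, whose tail lies in $\cV$, giving the ACS property. To see the converses fail, Example~\ref{ex:ACSex1} already exhibits an ACS that is not irreducible (vertex $3$ has no outgoing edge). For irreducible-but-not-a-cycle I will exhibit the three-vertex graph with $c_{12}=c_{21}=c_{23}=c_{32}=1$, which is strongly connected but contains two distinct $2$-cycles sharing vertex $2$ and is therefore not itself a single directed cycle.

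For (L2), if $\cG$ is acyclic, a topological ordering of $\cV$ conjugates $C$ by a permutation matrix into a strictly upper-triangular (hence nilpotent) non-negative matrix, so $\lambda(C)=\rho(C)=0$. If $\cG$ contains a cycle supported on a vertex set $V_0\subseteq\cV$ of length $k$, the principal submatrix $C_0$ of $C$ on $V_0$ is, after relabelling, the $k\times k$ cyclic permutation matrix, whose eigenvalues are the $k$-th roots of unity, so $\rho(C_0)=1$. The conclusion follows from the standard monotonicity of the spectral radius under passing to a principal submatrix of a non-negative matrix (a short consequence of the weak Perron-Frobenius Theorem already invoked above).

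For (L3), the first assertion follows by pigeonhole: starting from any $i_0\in\cV'$, the ACS property produces an in-neighbour $i_1\in\cV'$, then an in-neighbour $i_2\in\cV'$ of $i_1$, and so on. Since $\cV'$ is finite, the sequence must revisit some vertex, and the closed portion traces a directed cycle. For the second assertion, let $v\geq 0$ be an eigenvector for $\lambda=\lambda(C)\geq 1>0$ and put $S:=\{j:v_j>0\}$. For any $j\in S$, positivity of $\lambda v_j=(Cv)_j=\sum_k c_{jk}v_k$ forces some index $k$ with $c_{jk}=1$ and $v_k>0$, i.e.\ $k\in S$, with $k\neq j$ because self-loops are excluded. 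Hence the induced subgraph on $S$ satisfies the ACS condition. The only non-trivial input is the spectral-radius monotonicity used in (L2); everything else is elementary graph theory or direct bookkeeping on the definitions, so I expect no serious obstacles.
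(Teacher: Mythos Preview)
Your argument is correct and largely parallel to the paper's, with one slip and one genuine methodological difference worth noting. The slip: in your (L1) counterexample you cite Example~\ref{ex:ACSex1} and claim vertex $3$ has no outgoing edge, but it does (the entry $c_{43}=1$ gives the edge $3\to 4$); the graph is still a valid counterexample, since no path runs from $\{3,4\}$ back to $\{1,2\}$, but your stated reason is wrong. The paper instead uses Example~\ref{ex:base1}, where vertex $3$ really has no outgoing edge. The methodological difference is in (L2) for the cycle case: the paper argues that a cycle forces $C^n\not\equiv 0$ for every $n$, and since the entries of $C^n$ are non-negative integers this is incompatible with $\rho(C)<1$; your route via the principal submatrix on the cycle vertices being a cyclic permutation matrix with $\rho=1$, combined with monotonicity of the Perron root under principal submatrices, is more transparent and more standard. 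For (L1), (L3), and the acyclic half of (L2) the two proofs are essentially identical.
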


\begin{proof}
The implications~\eqref{eq:relations} in (L1) easily follow from 
the definitions, e.g., if $\cG$ is a cycle, then there exists a path 
from every vertex to every other by going along the cycle. The converse
implications are obviously false, e.g., Example~\ref{ex:base1} is an
ACS, which is not irreducible. Regarding (L2), if $\cG$ has no cycle,
then $C$ is nilpotent, so $\lambda(C)=0$. If $\cG$ has a cycle, then
$C^n\not\equiv 0$ for any $n\in \N$. Since the eigenvalues of $C^n$
are $n$-th powers of those of $C$ and $c_{ij}\in\{0,1\}$, it follows that
$\lambda(C)\geq 1$. The first part in (L3) is easy since no cycles means
there is a vertex without an edge pointing to it. If $\lambda(C)\geq 1$, then
we can re-order the vertices such that $\{1,2,\ldots,j_*\}=\{j:v_j>0\}$ and
compute for $i\leq j_*$ that
\benn
0<\lambda v_i = (Cv)_i=\sum_{j=1}^{j_*}c_{ij} v_j.
\eenn   
Hence, $c_{ij}=1$ for at least one index $j$, which means that the graph
induced by $\{c_{ij}\}_{j=1}^{j_*}$ is an ACS.
\end{proof}

The easiest parts of $\cG$ to deal with are graphs with no cycles. Let
\benn
\cT=\{j:\exists i\text{ s.t. }c_{ji}=1, c_{ij}=0~\forall i \}
\eenn
be the set of terminal vertex indices. Re-order vertex labels such that
$\cG$ has as first indices those in $\cT$ so that 
$\cT=\{1,2,\ldots,j^*\}$. Let $e_j$ denote the $j$-th
standard basis vector.

\begin{prop}
\label{prop:steadynocycles}
Suppose $\cG$ has no cycles. Then 
\benn
X^*:=\left\{\sum_{j=1}^{j^*} b_j e_j:\sum_{j=1}^{j^*} b_j=1\right\}
\eenn
are equilibrium points.
\end{prop}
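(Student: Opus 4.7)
The result reduces to a direct calculation exploiting the structural meaning of $\cT$. The key observation is that indices in $\cT$ correspond to zero columns of $C$, so the standard basis vectors $e_j$ with $j\in\cT$ all lie in the kernel of $C$, and hence so does every linear combination of them.

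First I would unpack the defining condition. The clause ``$c_{ij}=0$ for all $i$'' in the definition of $\cT$ says precisely that the $j$-th column of $C$ vanishes, which is equivalent to $Ce_j=0$. (The accompanying clause ``$\exists i$ with $c_{ji}=1$'' guarantees $j$ is not an isolated vertex but a genuine sink with at least one incoming edge; this is not needed for the equilibrium identity itself, only for ruling out the trivially empty case $j^\ast=0$.) The bookkeeping between the two indexing conventions, edges $j\to i$ versus the matrix entry $c_{ij}$, is the one place that wants a careful reading, since it can make ``column $j$ of $C$ is zero'' look like the wrong condition.

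Second, by linearity, for any $x_\ast = \sum_{j=1}^{j^\ast} b_j e_j \in X^\ast$ one immediately gets $Cx_\ast = \sum_{j=1}^{j^\ast} b_j\, Ce_j = 0$, and therefore $|Cx_\ast|_1 = 0$. Plugging this into the compact form \eqref{eq:JKvec} of the vector field yields $f(x_\ast) = Cx_\ast - |Cx_\ast|_1\, x_\ast = 0$, so every $x_\ast\in X^\ast$ is indeed an equilibrium of \eqref{eq:JK}. For the equilibrium to actually lie in $\cX$ as required by \eqref{eq:ssinX}, one also needs $b_j\ge 0$; the mass constraint $\sum_j b_j = 1$ is built into $X^\ast$ and gives $|x_\ast|_1 = 1$ automatically.

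There is essentially no obstacle here: the content of the lemma has been packaged into the definition of $\cT$, and what remains is a one-line computation. As a complementary remark I would note that $\cT$ is non-empty whenever $\cG$ contains at least one edge, since, as $\cG$ is a DAG, the endpoint of any maximal directed path is a sink with an incoming edge and hence lies in $\cT$; if $\cG$ has no edges at all, $X^\ast$ is empty and the proposition is vacuous.
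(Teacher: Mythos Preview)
Your proof is correct and follows essentially the same route as the paper's: both observe that for $j\in\cT$ the condition $c_{ij}=0$ for all $i$ makes the $j$-th column of $C$ vanish, so $Cx_\ast=0$ for any $x_\ast\in X^\ast$, whence $f(x_\ast)=0$. Your version is somewhat more explicit about the column interpretation and adds helpful side remarks on non-negativity and non-emptiness of $\cT$, but the core argument is identical.
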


\begin{proof}
Let $x^*\in X^*$. We find $(Cx^*)=0$ since $Cx^*$ only contains entries
for the non-terminal vertices. Therefore, we have $f(x^*)=0$.
\end{proof}

Although $X^*$ contains many equilibrium points, we shall see below 
that not all of them are stable. There is a well-defined
subset of $X^*$, which is going to be stable up to measure-zero initial
data.

\begin{defn}
Suppose $\cG$ has no cycles and terminal vertex set $\cT$. Suppose $j\in \cT$ 
and Let $p(j):=|\{i:\exists\text{ a path from $i$ to $j$}\}|$. Then 
define the maximal input equilibrium points
\benn
X_*:=\left\{\sum_{j=1}^{j^*} b_j e_j:\sum_{j=1}^{j^*} b_j=1, b_j\neq 0 
\text{ iff }p(j)=\max_{k\in\cT} p(k)\right\}.
\eenn
\end{defn}

Clearly, $X_*$ contains only equilibrium points by 
Proposition~\ref{prop:steadynocycles}. The next step is to consider
graphs with cycles. Any graph with at least one cycle has an ACS. The 
previous ideas can be generalized.

\begin{defn}
Suppose $\cG$ has at least one cycle. Let $v_1,\ldots,v_{j^*}$ be the 
set of Perron-Frobenous eigenvectors with eigenvalue $\lambda=\rho(C)$ 
associated to $\cG$. Then define
\benn
X_*:=\left\{\sum_{j=1}^{j_*} b_j v_j:\sum_{j=1}^{j^*} b_j=1,b_j\neq 0
\text{ $j$ is part of an ACS }\right\}.
\eenn
\end{defn}

One checks that $X_*$ contains only equilibrium points. Note that the last 
two definitions imply that $X_*$ is a well-defined for every graph 
$\cG$. $X_*$ is non-empty and we shall show below that it is
an attracting set of equilibrium points up to measure zero of initial 
conditions. 

\subsection{Stability}

In this section, we discuss one possible proof for stability. Some parts
are based upon formal calculations, mentioned in the work of Jain and 
Krishna~\cite{JainKrishna,JainKrishna1}, which we make rigorous here
adding a new geometric viewpoint. Consider the system of ODEs~\eqref{eq:JK}. 
Then consider the mapping
\be
\label{eq:JKtac}
G^{-1}:\R^d\ra \cX,\qquad y\stackrel{G^{-1}}{\mapsto}\frac{y}{|y|_1},\qquad y\in\R^d.
\ee
The inverse mapping $G:\cX\ra \R^d$ is defined via the pre-image so that 
$G$ is a multi-valued mapping, i.e., one should 
view the transformation~\eqref{eq:JKtac} \emph{not} as a coordinate 
change but as an unfolding of the phase space $\cX$ onto a larger phase 
space consisting of the non-negative quadrant/cone
\benn
\R_+^d:=\{x\in\R^d:x_j\geq 0 \text{ for all $j$}\}.
\eenn 
The next proposition shows that we just have a differently scaled 
and restricted version of real projective space.

\begin{prop}
The following results hold:
\begin{itemize}
 \item[(P1)] Fix any vector $v\in\R^d_+$, then all lines with direction $v$ through the origin in $\R_+^d$ 
map to the same point under $G^{-1}$; 
 \item[(P2)] On $\R^d_+$, the ODEs are given by
\be
\label{eq:yJK}
y'=Cy-\phi y,
\ee
valid for any choice of $\phi\in\R$.
\end{itemize}
\end{prop}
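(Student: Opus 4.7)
The plan is to verify (P1) by direct substitution, exploiting scale-invariance of $G^{-1}$, and to verify (P2) by a short quotient-rule computation that shows the $\phi y$ term contributes nothing to the projected dynamics. Both claims are algebraic in nature; their interest is conceptual, since together they set up a projective reformulation with a gauge freedom in $\phi$ that will be exploited in the subsequent stability analysis.

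For (P1), I would pick any nonzero $v\in\R^d_+$ and any $t>0$ and observe
\[ G^{-1}(tv) \;=\; \frac{tv}{|tv|_1} \;=\; \frac{tv}{t\,|v|_1} \;=\; \frac{v}{|v|_1}, \]
which is independent of $t$. Hence every half-line $\{tv:t>0\}\subset \R^d_+\setminus\{0\}$ collapses to a single point under $G^{-1}$, confirming the claimed projective structure.

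For (P2), I would start from any solution $y(t)$ of $y'=Cy-\phi y$ with $y(0)\in\R^d_+\setminus\{0\}$, set $x(t):=y(t)/|y(t)|_1$, and check that $x$ solves \eqref{eq:JKvec}. Summing components of the lifted ODE yields the scalar identity
\[ |y|_1' \;=\; \sum_{k=1}^d y_k' \;=\; |Cy|_1 - \phi\,|y|_1. \]
The quotient rule then gives
\[ x_j' \;=\; \frac{y_j'}{|y|_1} \;-\; \frac{y_j\,|y|_1'}{|y|_1^2} \;=\; \frac{(Cy)_j-\phi y_j}{|y|_1} \;-\; \frac{y_j\bigl(|Cy|_1-\phi|y|_1\bigr)}{|y|_1^2}. \]
Using linearity of $C$ and of the $L^1$-sum, $(Cy)_j/|y|_1=(Cx)_j$ and $|Cy|_1/|y|_1=|Cx|_1$. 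The two terms proportional to $\phi$ cancel and one is left with $x_j'=(Cx)_j-|Cx|_1\,x_j$, i.e., $x'=f(x)$, regardless of the choice of $\phi$.

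There is no real obstacle. The only point worth flagging in a careful write-up is that $|y|_1>0$ throughout, which is needed to justify the quotient rule; this holds because $\R^d_+\setminus\{0\}$ is preserved by the flow of \eqref{eq:yJK} by an invariance argument analogous to Proposition~\ref{prop:invariant} (components with $y_j=0$ satisfy $y_j'=(Cy)_j\geq 0$, and the total mass decays at most exponentially). Conceptually, the $\phi$-freedom is precisely the gauge that encodes the ray-collapse in (P1): different choices of $\phi$ reparametrise speed along each ray, and all choices descend to the same trajectory on $\cX$.
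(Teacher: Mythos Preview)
Your proof is correct and follows essentially the same approach as the paper: for (P1) both you and the paper verify scale-invariance of $G^{-1}$ by direct substitution, and for (P2) both apply the quotient rule to $x=y/|y|_1$ and observe that the $\phi$-terms cancel. Your write-up is in fact slightly more careful than the paper's (you compute $|y|_1'$ explicitly and flag the positivity of $|y|_1$), but there is no substantive difference in method.
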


\begin{proof}
For (P1), consider the line $y=l_v(a)=a v$ for $v \in[0,\I)^d$, $v\not\equiv 0$, $v$ 
fixed and $a\in (0,\I)$, then we calculate
\benn
G^{-1}(av)=\frac{a v}{|a v|_1}=\frac{v}{|v|_1}=x_v,
\eenn
where $x_v$ is fixed since $v$ is fixed for the line so all points on the line
map to the same point under $G$.
For (P2), note that we have on the cone $\R_+^d$ the simpler version 
of the $1$-norm $|y|_1=\sum_{j=1}^d y_j$ so that
\benn
\frac{\txtd x}{\txtd t}=\frac{y'|y|_1-y|y'|_1}{|y|_1^2}=\frac{Cy-\phi y}{|y|}
-\frac{y \sum_{j=1}^d (Cy)_j-\phi y_j }{|y|_1}=Cx-|Cx|x.
\eenn
The free parameter $\phi$ essentially gives the additional degree-of-freedom
due to the lack of mass conservation in~\eqref{eq:yJK}. 
\end{proof}

Due to (P1), we can view the model~\eqref{eq:JK}-\eqref{eq:JKc} as posed on part
of projective space $\R P^{d-1}=\R^d/(x\sim b x)$ for $b\neq 0$. Indeed, in our
context we just
start with the non-negative quadrant $\R^d$ and then apply the equivalence relation
$x\sim bx$ for $b\neq 0$; note that we use the $1$-norm instead of the usual 
$2$-norm to identify a unique point in $\cX$ but this makes no difference from a 
topological point of view. For stability, the ODEs~\eqref{eq:yJK} can be used.
Let $\mu_\cX$ denote the $(d-1)$-dimensional Lebesgue measure induced on $\cX$.

\begin{thm}
\label{thm:convtosteady1}
Suppose $\cG$ has at least one cycle. For almost every $x_0\in\cX$ (wrt $\mu_\cX$), 
there exists $x_*\in X_*$ such that
\benn
\lim_{t\ra +\I}x(t)=x_*\qquad \text{$x(t)$ solves~\eqref{eq:yJK} with $x(0)=x_0$.}
\eenn 
\end{thm}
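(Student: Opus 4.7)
The plan is to lift~\eqref{eq:JKvec} to a \emph{linear} problem by choosing $\phi=0$ in~\eqref{eq:yJK}: if $y(t)=e^{tC}y_0$ with $y_0=x_0\in\cX$, then by (P1)--(P2) the projection $x(t)=y(t)/|y(t)|_1$ solves~\eqref{eq:JKvec}. Since $C\geq 0$ the semigroup $e^{tC}$ preserves $\R^d_+\setminus\{0\}$, so $x(t)$ is well defined for all $t\geq 0$, and convergence of $x(t)$ in $\cX$ reduces to convergence of the ray through the origin spanned by $y(t)$, which I attack by spectral analysis.

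\textbf{Spectral dominance.} Because $\cG$ has at least one cycle, Lemma (L2) gives $\lambda:=\rho(C)\geq 1>0$. Decompose $\R^d=E_\lambda\oplus E_<$ into the generalized $\lambda$-eigenspace of $C$ and the sum of generalized eigenspaces at the remaining eigenvalues; let $\Pi$ be the spectral projector onto $E_\lambda$ and $N:=(C-\lambda I)\Pi$, which is nilpotent on $E_\lambda$. For a non-negative matrix every eigenvalue $\mu\neq\lambda$ satisfies $\mathrm{Re}(\mu)<\lambda$, whence $e^{tC}(I-\Pi)=o(e^{\lambda t})$, whereas on $E_\lambda$
\benn
e^{tC}\Pi y_0=e^{\lambda t}\sum_{j=0}^{d-1}\tfrac{t^j}{j!}N^j\Pi y_0.
\eenn
Given $y_0$ with $\Pi y_0\neq 0$, let $k=k(y_0)\geq 0$ be the largest integer with $N^k\Pi y_0\neq 0$ and set $w(y_0):=N^k\Pi y_0/k!\in\ker(C-\lambda I)\setminus\{0\}$; then $t^{-k}e^{-\lambda t}e^{tC}y_0\ra w(y_0)$ as $t\ra+\I$.

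\textbf{Limit and exceptional set.} For $y_0\in\cX$ with $\Pi y_0\neq 0$, dividing the asymptotic by its $1$-norm gives
\benn
x_*:=\lim_{t\ra+\I}\frac{y(t)}{|y(t)|_1}=\frac{w(y_0)}{|w(y_0)|_1}.
\eenn
Because $y(t)\geq 0$ for every $t$, the limit $w(y_0)$ is componentwise non-negative, so $|w(y_0)|_1>0$ and $x_*$ is a non-negative $\lambda$-eigenvector of unit $1$-norm. By the reducible Perron-Frobenius theorem the cone of non-negative $\lambda$-eigenvectors is generated by the PFEs $v_1,\ldots,v_{j^*}$, each of which is supported on an ACS by (L3); hence $x_*\in X_*$. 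The excluded set $\cB:=\{y_0\in\cX:\Pi y_0=0\}=\cX\cap\ker\Pi$ has $\mu_\cX$-measure zero, because $\ker\Pi$ is a proper linear subspace of $\R^d$ (as $\Pi\neq 0$, $\lambda$ being an eigenvalue) and any proper linear subspace meets the affine hyperplane $\{x:\sum_j x_j=1\}$ in an affine subspace of dimension at most $d-2$.

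\textbf{Main obstacle.} The real difficulty lies in the spectral step. In the Jain-Krishna setting $C$ is typically reducible and several ACSs can share the spectral radius $\lambda$ while being joined by one-way bridges, producing Jordan blocks of size $>1$ at $\lambda$ and several dominant Perron directions at once. The reducible version of Perron-Frobenius must be invoked to guarantee both that no eigenvalue $\mu\neq\lambda$ satisfies $\mathrm{Re}(\mu)=\lambda$, so that $E_\lambda$ is genuinely asymptotically dominant, and that the leading vector $w(y_0)$ produced by the polynomial growth on $E_\lambda$ is non-negative and supported on ACS components, so that the limit lands in the convex hull of the PFEs and not in some spurious direction of $\ker(C-\lambda I)$. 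This coupling between Lemma (L3) and the linear-algebraic asymptotics is the only non-routine ingredient; the lifting and the measure-zero argument are essentially bookkeeping.
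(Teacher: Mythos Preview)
Your argument is correct and follows essentially the same route as the paper: lift the nonlinear simplex dynamics to the linear system $y'=(C-\phi\Id)y$, use that the Perron eigenvalue $\lambda=\rho(C)$ strictly dominates the real part of every other eigenvalue (which you observe is automatic once $|\mu|\le\lambda$ and $\mu\neq\lambda$), exclude the measure-zero set of initial data with trivial projection onto the leading generalized eigenspace, and project back to $\cX$. The paper chooses $\phi<0$ so that every direction expands, whereas you take $\phi=0$; this is cosmetic. Your treatment of possible Jordan blocks at $\lambda$ via the nilpotent $N$ and the index $k(y_0)$ is in fact slightly more explicit than the paper's, which speaks of ``strongest expanding directions'' without isolating the polynomial prefactor; your observation that the limit vector $w(y_0)$ inherits non-negativity from $y(t)\ge0$ and hence lies in the cone generated by the PFEs is exactly the mechanism that places $x_*$ in $X_*$.
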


\begin{proof}
As before, fix $v\not\equiv 0$, $v\in\R^d_+$, and set $l_v:=\{y\in\R^{d}_+:\exists 
a\geq 0\text{ s.t. }av=y\}$. Let $\mu_Y$ denote the Lebesgue measure induced via 
$G$ on $\R^{d}_+$, i.e.,
\benn
\mu_Y(\cB):=\mu_\cX(\{v\in\cX:l_v\cap \cB\neq \emptyset\}).
\eenn  
Since $\cG$ has at least one cycle, any $x_*$ is a weighted convex combination of 
vectors $\{v_j\}_{j=1}^{j_*}$ in the eigenspace 
associated to the leading Perron-Frobenius eigenvalues. Consider the system~\eqref{eq:yJK} 
and fix any $y_0=y(0)$. Select $\phi<0$ such that all solutions $y(t)$ diverge sufficiently
fast, i.e., $|y|_1\geq p(t)\txte^{kt}$ for all $t\geq t_0>0$ and some polynomial $p(t)$. 
The solution to~\eqref{eq:yJK} can be written as 
\be
\label{eq:Clinsolve}
y(t)=\sum_{j=1}^d p_j(t)\txte^{\tilde{\lambda}_j t}\tilde{v}_j, 
\ee 
where $(\tilde{\lambda_j},\tilde{v}_j)$ are (generalized) eigenpairs for $C-\phi\Id$, 
$\textnormal{Re}(\tilde{\lambda}_j)>0$, and $p_j(t)$ are polynomials in $t$. Note that
up to a re-ordering of variables, we can assume that $\tilde{\lambda}_j$ for 
$j=1,2,\ldots,j_*$ are the leading eigenvalues associated to
the Perron-Frobenius eigenvalues of $C$ as defined in Section~\ref{ssec:steadystates}
by a shift of $-\phi$. In particular, we have 
\be
\label{eq:eigcondcool}
\textnormal{Re}(\tilde{\lambda}_{j_*})>\textnormal{Re}(\tilde{\lambda}_j)\quad 
\forall j\geq j_*.
\ee
Given any $\varepsilon>0$ and $y_0\in \cH:=\{y:y\notin\textnormal{span}(v_{j}:j>j_*)\}$, 
there exists $T$ such that 
\be
\label{eq:Hauscond}
d_H(y(t),\textnormal{span}(v_j:j\leq j_*))<\varepsilon\qquad \forall t> T,
\ee  
where $d_H$ is the usual Hausdorff distance and we used that the strongest expanding 
directions in~\eqref{eq:Clinsolve} eventually dominate any weaker expanding direction, i.e., 
we have
\benn
\lim_{t\ra +\I} \frac{p_{j}(t)\txte^{\tilde{\lambda}_{j}t}}{p_{j_*}(t)\txte^{\tilde{\lambda}_{j_*}t}}=0
\eenn
for any non-trivial polynomials $p_{j}(t)$ and $p_{j_*}(t)$ by~\eqref{eq:eigcondcool}
from which we can conclude~\eqref{eq:Hauscond}. 
Note that we just excluded a set of measure $\mu_Y$ in the last argument as initial 
conditions satisfy $\mu_Y(\cH)=0$ since Lebesgue measure vanishes on subspaces 
of dimension strictly less than the space dimension. Next, we observe that any point in
subspaces contained
\benn 
\textnormal{span}(v_j:j\leq j_*)
\eenn
is associated to a line $l_v$ such that $l_v$ maps to a point $x_*\in X_*$ under 
$G$. Since $\varepsilon>0$ was arbitrary, the result follows.
\end{proof}

The intuition is that the PFEs define the subspace, which is fastest growing so 
all initial conditions except those in non-leading eigenspaces are attracted to 
the span of the PFEs as $t\ra +\I$. To cover the case of no cycles, we start
with a definition:

\begin{defn}
The power-weighted and power-weighted average variables $R_n$ 
and $r_n$ are defined as follows
\benn
R_n:=C^nx\qquad \text{and}\qquad r_n:=\sum_{j=1}^d (C^nx)_j.
\eenn
\end{defn}

\begin{lem}(Adaptive Network Dynamics Intertwining)
\label{lem:ANDI}
The variables $r_n=r_n(t)$ satisfy the infinite-dimensional ODE system
\be
\label{eq:moments}
r_n'=r_{n+1}-r_nr_1\qquad n\in \N.
\ee
The variables $R_n=R_n(t)$ satisfy the infinite-dimensional ODE system
\be
\label{eq:moments1}
R_n'=R_{n+1}-R_n |R_1|_1\qquad n\in \N.
\ee
\end{lem}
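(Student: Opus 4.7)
The plan is to exploit the fact that in this frozen-graph setting $C$ is a constant matrix, so the time derivative commutes with left-multiplication by $C^n$, and then to deduce the scalar system \eqref{eq:moments} from the vector system \eqref{eq:moments1} by summing components.

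First I would prove \eqref{eq:moments1}. Differentiating $R_n = C^n x(t)$ in $t$ and pulling the constant matrix $C^n$ through gives
\[
R_n' = C^n x' = C^n\bigl(Cx - |Cx|_1 x\bigr) = C^{n+1} x - |Cx|_1\, C^n x.
\]
Since $|Cx|_1 = |R_1|_1$ and $C^{n+1}x = R_{n+1}$, this is exactly $R_n' = R_{n+1} - |R_1|_1 R_n$.

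To obtain \eqref{eq:moments}, I would take the sum of components of both sides of \eqref{eq:moments1}. By Proposition~\ref{prop:invariant} we have $x(t)\in\cX \subset \R^d_+$ for $t>0$, and since $C\geq 0$ entrywise, $R_n = C^n x \geq 0$; thus $r_n = \sum_j (R_n)_j = |R_n|_1$ with no sign ambiguity. Summing components in \eqref{eq:moments1} and using linearity yields
\[
r_n' \;=\; \sum_{j=1}^d (R_{n+1})_j \;-\; |R_1|_1 \sum_{j=1}^d (R_n)_j \;=\; r_{n+1} - r_1 r_n,
\]
which is \eqref{eq:moments}.

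There is no real obstacle here — the identities are a direct consequence of the linearity of $C$ and the fact that the graph $C$ is frozen so that $C^n$ passes through the time derivative; the only point requiring care is the identification $r_n = |R_n|_1$, which relies on the non-negativity established in Proposition~\ref{prop:invariant}. The content of the lemma is not computational but conceptual: it states that the polynomial moments $r_n$ (respectively vector moments $R_n$) of $x$ under the iterated action of $C$ satisfy an intertwined infinite hierarchy, which is what enables the link between network topology (encoded through powers of $C$) and convergence properties to be exploited in the subsequent analysis.
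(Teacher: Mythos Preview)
Your proof is correct and follows essentially the same direct computation as the paper: differentiate, pull the constant $C^n$ through, and identify the resulting terms. The only cosmetic difference is that the paper computes the $r_n$ equation directly rather than deducing it from the $R_n$ equation; your extra care about non-negativity to justify $r_n=|R_n|_1$ is harmless but not strictly needed, since in this paper $|\cdot|_1$ is by definition the component sum (cf.~\eqref{eq:JKvec}), so $|R_1|_1=r_1$ automatically.
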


\begin{proof}
We just compute
\beann
r_n'&=&\sum_{j=1}^d (C^nx')_j=\sum_{j=1}^d\left(C^{n+1}x-C^nx|Cx|_1\right)_j\\
&=&\sum_{j=1}^d\left(C^{n+1}x\right)_j-\sum_{j=1}^d\left(C^nx\right)_j|Cx|_1\\
&=& r_{n+1}-r_nr_1,
\eeann
which shows~\eqref{eq:moments}. The computation 
for $R_n$ is even easier 
\benn
R_n'=C^nx'=C^{n+1}x-|Cx|_1 C^nx=R_{n+1}-|R_1|_1 R_n,
\eenn
which finishes the proof.
\end{proof}

The Adaptive Network Dynamics Intertwining (ANDI) Lemma~\ref{lem:ANDI} 
connects the topology and cycle structure of the graph with the vertex 
dynamics into a \emph{bigger} dynamical system. Although the proof is very 
simple, the insight is still substantial
as we have connected the dynamics \emph{on the network} modelled 
by~\eqref{eq:JK}-\eqref{eq:JKc} with the dynamics \emph{of the network}
encoded in $C$. Since this general approach to find a dynamical system 
encoding \emph{both components} of an adaptive network is not restricted 
to the Jain-Krishna model, we conjecture there are ANDI Lemma results
for many other types of adaptive networks. Using Lemma~\ref{lem:ANDI}
it is easy to check that certain components must decay to zero.

\begin{prop}
\label{prop:terminalelim}
Suppose $\cG$ has no cycles and $j\not\in \cT$, then $x_j(t)\ra 0$
as $t\ra +\I$. 
\end{prop}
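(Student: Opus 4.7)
The plan is to mimic the unfolding/projective argument used in the proof of Theorem~\ref{thm:convtosteady1}, adapted to the nilpotent regime forced by the absence of cycles. By part (L2) of the preceding lemma, $\cG$ having no cycles gives $\lambda(C)=0$; equivalently $C$ is nilpotent, so there is a minimal integer $M\leq d$ with $C^M=0$. Lifting to $\R_+^d$ via $y'=Cy-\phi y$ with $y(0)=x(0)$ and any fixed $\phi<0$, the nilpotency of $C$ gives the polynomial representation $\txte^{Ct}=\sum_{k=0}^{M-1} t^k C^k/k!$ and hence the closed form
$$
y(t)\;=\;\txte^{-\phi t}\sum_{k=0}^{M-1}\frac{t^k}{k!}\,C^k y(0),
$$
while $x(t)=y(t)/|y(t)|_1$ by parts (P1)--(P2).

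Next I would set $k^{*}:=\max\{k\geq 0:C^k y(0)\neq 0\}$, which is well-defined because $y(0)=x(0)$ is non-zero and $C^0=\Id$. Dividing numerator and denominator of $y(t)/|y(t)|_1$ by the common factor $\txte^{-\phi t}t^{k^{*}}/k^{*}!$ kills all subdominant terms as $t\to\infty$ and yields
$$
\lim_{t\to\infty}x(t)\;=\;\frac{C^{k^{*}}y(0)}{|C^{k^{*}}y(0)|_1}\;=:\;v.
$$
The proposition will then follow once I show $\mathrm{supp}(v)\subseteq\cT$. The core step is a positivity argument: since both $y(0)$ and $C$ have non-negative entries, so does $C^{k^{*}}y(0)$. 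Assume for contradiction that $v_j>0$ for some $j\notin\cT$; then $j$ has at least one out-edge, so some $i'$ satisfies $c_{i'j}=1$, and
$$
\bigl(C^{k^{*}+1}y(0)\bigr)_{i'}\;=\;\sum_{\ell}c_{i'\ell}\,\bigl(C^{k^{*}}y(0)\bigr)_{\ell}\;\geq\;c_{i'j}\,\bigl(C^{k^{*}}y(0)\bigr)_{j}\;>\;0,
$$
contradicting the maximality of $k^{*}$. Hence $v_j=0$ for all $j\notin\cT$, which is exactly the claim.

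The degenerate case $k^{*}=0$ (i.e., $Cy(0)=0$) folds into the same picture: the non-negativity of $y(0)$ forces $y(0)$ to already be supported on sinks, so $x(t)$ is a stationary equilibrium whose support is contained in $\cT$. The main delicate point, although mild once the unfolding is in place, is bookkeeping the polynomial dominance inside the projective quotient: Theorem~\ref{thm:convtosteady1} exploits an exponential gap between the leading Perron--Frobenius eigenvalue and the others, but here all eigenvalues of $C$ coincide (at zero), so dominance must come purely from the polynomial weights $t^k$ attached to the nilpotent Jordan chain of $C$. This is why the index $k^{*}$, rather than a spectral gap, becomes the natural organizing quantity of the proof.
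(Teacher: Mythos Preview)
Your proof is correct and takes a genuinely different route from the paper's. The paper argues via the intertwining Lemma~\ref{lem:ANDI}: setting $R_n=C^nx$, nilpotency of $C$ truncates the hierarchy $R_n'=R_{n+1}-R_n|R_1|_1$ at some finite $n_*$, and a downward cascade starting from $R_{n_*}'=-R_{n_*}|R_1|_1$ drives $R_1\to 0$; since the entries of $R_1=Cx$ are non-negative combinations of the $x_i$ over vertices $i$ possessing an out-edge, those components must vanish. You instead lift to the linear system $y'=(C-\phi\Id)y$ and exploit nilpotency directly through the finite expansion $\txte^{Ct}=\sum_{k<M}t^kC^k/k!$, so that polynomial dominance in $t$ singles out the layer $C^{k^*}y(0)$. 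This is essentially the technique the paper reserves for the \emph{subsequent} result, Theorem~\ref{thm:convpart2}. Your approach is more explicit---it actually identifies the limit, not merely which coordinates vanish---and sidesteps the somewhat informal cascade step in the paper's argument; the paper's route, on the other hand, showcases the ANDI lemma as a structural tool.

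Two small points. First, the step ``$j\notin\cT$, hence $j$ has at least one out-edge'' is not literally a consequence of the definition of $\cT$ alone, since an isolated vertex also lies outside $\cT$; what makes it true here is that $v_j>0$ with $k^*\ge 1$ already forces $j$ to have an in-edge (because $v$ lies in the range of $C$), after which $j\notin\cT$ does yield an out-edge. You use this implicitly, but it is worth making explicit. Second, in the degenerate case $k^*=0$ you identify the support of $y(0)$ with the set of sinks rather than with $\cT$; isolated sinks lie outside $\cT$. This is precisely the edge case the paper disposes of separately at the start of its own proof (``we can eliminate all vertices $j$ which are not connected to any other vertices''), and it is harmless once $\cG$ has at least one edge and $x(0)$ puts some mass on a vertex with an out-edge, which guarantees $k^*\ge 1$.
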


\begin{proof}
First, we can eliminate all vertices $j$, which are not connected to any
other vertices since they satisfy the equation
\benn
x_j'=-x_j|Cx|_1.
\eenn
Similarly, if $\cG$ has multiple connected components, we can 
analyze each component separately so we restrict attention to a
single connected (sub-)graph. Order the indices so that 
$\{1,2,\ldots,j^*\}=\cT$. Since there are no 
cycles, $C$ is nilpotent. Therefore, $R_n=0$ for all 
$n> n_*$ for some $n_*\geq 2$ and so
\beann
R_1'&=& R_2-R_1|R_1|,\\
\vdots &=& \vdots,\\
R_{n_*}'&=& -R_{n_*}|R_1|.\\
\eeann
Therefore, $R_{n_*}\ra 0$ as $t\ra +\I$ and we eventually get by
eliminating everything up to the first equation that $R_1\ra 0$ as
well. However, $R_1$ only contains linear combinations of non-terminal
vertex variables, which proves the result. 
\end{proof}

\begin{thm}
\label{thm:convpart2}
Suppose $\cG$ has no cycles and $\cG$ has at least one edge. For 
almost every $x_0\in\cX$ (wrt $\mu_\cX$), there exists $x_*\in X_*$ such that
\benn
\lim_{t\ra +\I}x(t)=x_*\qquad \text{$x(t)$ solves~\eqref{eq:yJK} with $x(0)=x_0$.}
\eenn 
\end{thm}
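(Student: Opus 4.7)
The plan is to work in the $y$-variables via the unfolding $G^{-1}:\R^d_+\ra\cX$, solve the resulting linear ODE~\eqref{eq:yJK} explicitly using nilpotency of $C$ from (L2), and project back to $\cX$. As in Proposition~\ref{prop:terminalelim}, we first reduce to the case where $\cG$ is weakly connected (disjoint components are handled piecewise). Choosing the gauge $\phi=0$ in~\eqref{eq:yJK} we have $y'=Cy$; since $\cG$ is acyclic, $C$ is nilpotent of some index $n_*+1\le d$ by (L2), so
\benn
y(t)=\txte^{Ct}y_0=\sum_{k=0}^{n_*}\frac{t^k}{k!}C^k y_0,
\eenn
a polynomial in $t$ with vector coefficients in $\R^d_+$. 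Because $y(t)/|y(t)|_1$ is invariant under positive rescalings of $y_0$, this ratio governs the asymptotics of $x(t)$ on $\cX$.

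Let $L_\cG:=\max\{k:C^k\neq 0\}$ denote the length of the longest directed path in $\cG$ and $L(y_0):=\max\{k:C^k y_0\neq 0\}$. For $y_0$ in the open positive cone one has $L(y_0)=L_\cG$, and $(C^{L_\cG}y_0)_j>0$ precisely for those vertices $j$ at which some directed path of length $L_\cG$ terminates; any such $j$ must be a sink (else the path could be extended), so $j\in\cT$ and realises the graph-wide maximum input depth, i.e., is one of the terminal vertices indexing $X_*$. Dividing numerator and denominator of $y(t)/|y(t)|_1$ by $t^{L_\cG}/L_\cG!$ and passing $t\ra+\I$ yields
\benn
\lim_{t\ra+\I}x(t)=\frac{C^{L_\cG}y_0}{|C^{L_\cG}y_0|_1}=:x_*\in X_*,
\eenn
a convex combination of the $e_j$ over the support of $C^{L_\cG}y_0$, which after normalisation sums to one and is non-negative, hence lies in $\cX$.

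The main obstacle is the genericity step, in which one must show that the set of $y_0\in\cX$ where the argument above fails is $\mu_\cX$-null. The failure modes are (i) $L(y_0)<L_\cG$, so the leading power of $t$ drops, and (ii) some $(C^{L_\cG}y_0)_j$ cancels for a $j$ that generically lies in the support. Each such event is the vanishing of a nontrivial polynomial in the coordinates of $y_0$, essentially a weighted path-counting sum over directed paths of length $L_\cG$, so the exceptional set is a finite union of proper real-algebraic subvarieties of the $(d-1)$-dimensional simplex $\cX$ and is therefore $\mu_\cX$-null. A final routine check is that convergence transports from $y$-space to $\cX$: the whole ray through $y(t)$ collapses to a single point under $G^{-1}$ by the projective identification, and $G^{-1}$ is continuous on $\R^d_+\setminus\{0\}$, so convergence of $y(t)/|y(t)|_1$ yields convergence of $x(t)$ in $\cX$.
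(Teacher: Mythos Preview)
Your approach is essentially the paper's: unfold to the $y$-variables, solve the linear equation explicitly via nilpotency, and let the leading term dominate after normalisation. Taking $\phi=0$ instead of the paper's $\phi<0$ is harmless, since the common scalar factor $e^{-\phi t}$ cancels under $G^{-1}$; your version exposes the polynomial structure more cleanly.

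There is, however, a real gap at the identification step. You correctly show that for generic $x_0$ the limit is $C^{L_\cG}x_0/|C^{L_\cG}x_0|_1$, supported exactly on the sinks that terminate some longest directed path. You then assert that such a sink ``realises the graph-wide maximum input depth, i.e., is one of the terminal vertices indexing $X_*$''. But the paper defines $X_*$ through $p(j)=|\{i:\exists\text{ path }i\to j\}|$, the \emph{number} of ancestors of $j$, not the length of the longest incoming path. These quantities need not share maximisers even on a weakly connected acyclic graph: take $\cV=\{1,\dots,7\}$ with edges $1\to 2\to 3$ together with $1\to 7$, $4\to 7$, $5\to 7$, $6\to 7$. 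The terminal vertices are $3$ and $7$, with $p(3)=2$ and $p(7)=4$, so the paper's $X_*=\{e_7\}$; yet the unique longest path has length $2$ and ends at $3$, and your own (correct) computation of $y(t)=e^{Ct}x_0$ gives $y_3(t)\sim \tfrac12 (x_0)_1 t^2$ while $y_7(t)$ is linear in $t$, so $x(t)\to e_3\notin X_*$. Hence ``$x_*\in X_*$'' does not follow from your argument. The paper's own proof shares this difficulty: it invokes the total path count $\sum_{n,j}(C^n)_{ij}$ as the growth criterion, but what actually determines which component of $y(t)$ dominates is the polynomial \emph{degree}, i.e., the maximal incoming path length---precisely the quantity you isolated.
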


\begin{proof}
By Proposition~\ref{prop:terminalelim}, we already know that all components 
associated to non-terminal vertices vanish as $t\ra +\I$, so any accumulation 
point $x_*$ of $\{x(t):t\in[0,\I)\}$ must lie in $X^*$. We still have to prove 
that $x_*\in X_*$, i.e., only those terminal components with the maximum number 
of input paths can remain. Without loss of generality, suppose we just have a 
graph $\cG$ with a single connected component. Since $C$ is the transpose
of the classical adjacency matrix, it follows that 
\benn
(C^n)_{ij} = |\{\textnormal{paths from $j$ to $i$ of length $n$}\}|.
\eenn
Hence, if $\{1,2,\ldots,j^c\}$ with $j^c\leq j^*$ is the set of indices of
terminal vertices with the maximum number of paths pointing into them, we have
\be
\label{eq:condpaths}
\sum_{j=1}^d\sum_{n=0}^\I (C^n)_{i_1j} > \sum_{j=1}^d \sum_{n=0}^\I (C^n)_{i_2j}\quad 
\text{if $i_1\leq j^c$ and $i_2>j^c$.} 
\ee
Of course, the summation over $n$ is actually finite since $C$ is nilpotent as
there are no cycles. Switching again to the formulation of the ODEs~\eqref{eq:yJK}
on projective space we have
\benn
(y(t))_i=\sum_{j=1}^d\sum_{n=0}^\I \frac{t^n}{n!}((C-\phi\Id)^n)_{ij} (y(0))_j.
\eenn
Hence, using that the identity matrix commutes with every matrix, 
applying~\eqref{eq:condpaths}, and picking $\phi$ so that all components grow,
similar to the idea in the proof of Theorem~\ref{thm:convtosteady1}, we see that
those components with $i\leq j^c$ grow fastest so after projection into 
$x$-variables, only those terminal vertices with maximal number of input paths
will remain in the equilibrium point.
\end{proof}

In summary, the result for the ODE dynamics of the Jain-Krishna model is 
relatively simple: generically, we obtain an equilibrium point, which either
is non-trivial and governed by one (or more) ACS, or the graph has no cycles and 
we concentrate on the terminal vertices with maximal input. In the infinite time-scale
separation limit, this essentially means that we can regard the ODE dynamics 
as a discrete map, which produces upon a given input $C$ generically a vector 
$x_*$.

\section{Observations for the Discrete-Time Edge Dynamics}
\label{sec:dis}

In this section, we are going to state several observations and conjectures 
regarding the discrete-time Jain-Krishna graph update rule (or ``JK update'', for short) 
stated in Section~\ref{sec:JKmodel}. In contrast to Section~\ref{sec:cts}, we do not 
attempt to provide full proofs 
of our observations in this section, which is a task left for future work.\medskip

We assume that the time-scale separation is infinite in the
sense that the ODEs in Section~\ref{sec:cts} converge instantaneously to an
equilibrium point $x_*$. Consider the index sets
\benn
\cI_*:=\{i\in\{1,2,\ldots,d\}:(x_*)_i>0\}\quad \text{and}\quad 
\cK_*:=\{i\in\{1,2,\ldots,d\}:(x_*)_i=0\}.
\eenn
We may always assume that $\cI_*$ is non-empty as equilibrium points for the ODEs
treated in Section~\ref{sec:cts} satisfy $|x_*|_1=1$. Furthermore, observe that if $\cK_*$ 
is non-empty then $\cK_*=\cJ_*$, which is a natural starting point. Recall that in this 
case, the JK update eliminates all edges to and from a randomly chosen vertex in $\cK_*$, 
and then inserts new edges to and from all other vertices with a fixed probability $p$.
We are interested in characterizing cycles, respectively also ACS, as they
determine, how many vertices are active, i.e., the number of elements $|\cI_*|$
in the set $\cI_*$. 
 
\subsection{Cycle Distribution for Random Graphs}

Since cycles are a key component of an ACS, let us start with looking at the cycles
for the first graph $\cG[0]=\cG$, which is of Erd{\H{o}}s-R{\'e}nyi type.
We start with the undirected case and denote the ensemble of Erd{\H{o}}s-R{\'e}nyi 
random graphs by $\textnormal{ER}_d(p)$, where $p$ is the probability for each 
edge being present, or by $\textnormal{ER}_d(\theta/d)$ where $\theta:=pd$. 
The degree distribution is well-known~\cite{Bollobas,vanderHofstad} 
\beann
p_k:=\P(\textnormal{degree}(v)=k)&=&\left(\begin{array}{c}d-1 \\ k \end{array}\right)
p^k(1-p)^{d-1-k}\\
&=&\frac{(d-1)!}{k!(d-1-k)!}p^k(1-p)^{d-1-k}.
\eeann
The binomial distribution $\textnormal{Bin}(d,\theta/d)$ converges to the 
Poisson distribution $\textnormal{Poi}(\theta)$
\benn
\lim_{d\ra \I} \P\left(\textnormal{Bin}(d,\theta/d)=k\right)
=\txte^{-\theta}\frac{\theta^k}{k!},
\eenn
for each $k\in\N_0$. So $p_k=\txte^{-\theta}\theta^k/(k!)$ is an approximation
for the degree distribution of very large Erd{\H{o}}s-R{\'e}nyi graphs. Hence,
one expects that the Poisson distribution should also appear
in the context of the random variable $\cC_k$ counting the number of cycles 
of length $k$ in $\textnormal{ER}_d(\theta/d)$. Let $k\geq 3$, consider $\theta>0$ 
and define $\mu:=\theta^k/(2k)$. Then one can obtain~\cite{Bordenave} convergence 
in distribution
\benn
\cC_k\stackrel{\txtd}{\ra} \textnormal{Poi}(\mu)=:c_k.
\eenn  
One nice proof uses Stein's method as discussed in~\cite{Bordenave}. For directed
graphs, we note that for a given undirected cycle of length $k$, there are $2^k$
possible edge orientations possible and only two form a directed cycle, so the
distribution is just given by
\benn
\tilde{c}_k=\frac{c_k 2^{k-1}}{\sum_{k=3}^\I c_k 2^{k-1}},\qquad \text{for $k\geq 3$.}
\eenn   
We can also include directed two-cycles by noticing that we need two consecutive
successful binomial trials to generate a two-cycle. Although it is nice to know,
how likely cycles are for the first graph, and although it is evident that
\benn
\P(\textnormal{no cycles})\ra 0\qquad \text{as $d\ra \I$,}
\eenn
there is clearly often the situation that either no cycles, or very few cycles, or just
a very small ACS, appear in a graph $\cG[s]$
in the Jain-Krishna model. Indeed, the Poisson distribution decays very rapidly and 
because even very large ACS/cycle structures can be destroyed if all edges are part
of an ACS/cycle, we have to consider the time scale it takes to generate a cycle by
the Erd{\H{o}}s-R{\'e}nyi graph sampling construction viewed as a dynamical process
in discrete time $s\in\N_0$ (recall we re-sample the transpose of the adjacency matrix
at each time step $s$ for a single vertex). 

\subsection{The First Cycle}

Fix a starting time, wlog $s=0$ and recall that $\cG=\cG[0]$ is fully characterized by 
the transpose of the adjacency matrix $C[0]$. Suppose there is no undirected 
cycle at $s=0$. 
In this section, we just consider the formation of a cycle in the undirected case
but we expect that similar results/conjectures hold upon suitable modification also
for the directed case, i.e., although the graph is directed, we are only interested in
the formation time of the first undirected cycle for simplicity. Since the 
re-sampling at each discrete time step is according to an Erd{\H{o}}s-R{\'e}nyi-type 
rule, we have to examine cycle formation for this rule in further detail.

Recall that the mean degree of $\textnormal{ER}_d(p)$ is $dp$. There is a phase 
transition~\cite{ErdosRenyi} at $dp=1$ leading to a giant component for $dp>1$.
Assume that the undirected version $\cH$ of $\cG$ has an Erd{\H{o}}s-R{\'e}nyi graph 
structure $\textnormal{ER}_d(p)$ and that $\cH$ has no cycle.\medskip 

\textbf{Conjecture 1: If $dp>1$, then the formation of a cycle according to the 
Jain-Krishna update requires $\cO(1)$ (as $d\ra \I$ with $dp$ kept constant) 
time steps independent of any other parameters.}\medskip

Roughly speaking, the conjecture states that cycle formation for $dp>1$ is
extremely \emph{fast}. Let us provide a heuristic to motivate the conjecture. 
Consider deleting and then adding a single vertex to $\cH$ with edge probability to 
other vertices given by $p$. To form a cycle it is most likely that the cycle 
has at least one vertex part of the giant component. Then we have
\benn
\P(\text{``formation of at least one new cycle''})\approx 1-\sum_{j=0}^1 
\left(\begin{array}{c}d-1 \\ j\end{array}\right) p^j(1-p)^{d-1-j},
\eenn
where the second term arises as it is the probability that we only 
connected at most one edge to the giant component. This means in the usual
Poisson approximation we get
\benn
\P(\text{``formation of a at least one new cycle''})\approx 1-\txte^{-dp}(1+dp).
\eenn
Hence, we get the expected result that for a giant component regime, we just
need very few trials to generate a cycle. Since in the Jain-Krishna model, only
directed cycles are considered, a few additional re-samplings might be required
but we shall not discuss this issue here. It is more interesting to look at
the case $dp<1$, since in this case, the approximation of all components by 
the single giant component is clearly wrong.\medskip

\textbf{Conjecture 2: If $dp<1$, then the number of steps $s_*$ to generate
the first cycle by the Jain-Krishna rule is $\cO(d/p)$.}\medskip

The last conjecture says that it may take a long time until a first cycle is 
formed. Again we provide a heuristic argument. Suppose $\cH$ 
has $d$ vertices. Since there is no giant component, forming small loops with 
existing edges is very unlikely. Therefore, the cycle formation problem
can be approximated by the problem to form the \emph{first cycle} in an empty graph $\cH$,
which is filled in each time step with edges according to the JK update.

Instead of the JK update, we break the problem into smaller steps just looking at
adding single edges in each time step. 
Enumerate all possible pairs of edges $e_{ij}$ between vertices $i$ and $j$ with 
$1\leq i\leq j \leq d$. Now
we go through these pairs iteratively and decide at each step with probability $p$,
to add the edge to the graph. This is the classical \emph{permutation model}
of the Erd{\H{o}}s-R{\'e}nyi graph $\textnormal{ER}_d(p)$. Another possible and
related construction is the \emph{uniform model}, where $i$ and $j$ are drawn 
uniformly from $\{1,2,\ldots,d\}$ and the edge $e_{ij}$ is added with probability $p$.
Of course, this means self-loops and double edges can appear generating a random
multigraph, which has actually very similar properties to the random graph
generated via the permutation model. 

It is now a \emph{classical question}, how the first cycle appears in these constructions and 
what its expected length is. In fact, this line of research goes back directly to the 
original work by Erd{\H{o}}s and R{\'e}nyi \cite{ErdosRenyi} and has been studied by a
variety of combinatorial~\cite{FlajoletKnuthPittel} and probabilistic 
techniques~\cite{BollobasRasmussen,Janson}.\footnote{Interestingly, even the same idea of
using catalytic feedback loops to describe pre-biotic evolution appears in the context 
of these classical works on first cycle generation~\cite{BollobasRasmussen} so this 
can be viewed as an early variant the Jain-Krishna model already.} 

It is known that for the uniform and the permutation model, the expected length
of this first cycle is $\cO(d^{1/6})$ as $d\ra \I$ with standard deviation
$\cO(d^{1/4})$. For us, it is more interesting to estimate the time it takes to
generate the first cycle. The expected number of edges when the first cycles 
appears is~\cite{FlajoletKnuthPittel} 
\be
\label{eq:scale1G}
\frac13 d + \cO(d^{5/6})\qquad \text{as $d\ra \I$}
\ee  
for the uniform model, while it is 
\be
\label{eq:scale2G}
\frac12 (1-p_*)d + \cO(d^{5/6})\qquad \text{as $d\ra \I$}
\ee  
for a constant $p_*\approx 0.12$ in the permutation model. Since we add one edge
per step at probability $p$, there are an average of $ps_*$ edges after $s_*$ steps
for each model. Discarding the higher-order terms we get that the condition
\benn
\frac13 d \stackrel{!}{=} ps_* \quad \Rightarrow \quad s_*=\frac13 \frac{d}{p}
=\frac13 d^2/\theta
\eenn
is conjectured to give a good approximation to obtain a first cycle in $s_*$ steps
of the uniform model. A similar condition appears for the permutation model so
we always have $s_*=\cO(d^2/\theta)$, where obviously $\theta=\theta(d)$ may
depend upon the size of the graph in most applications. 

Applying these results to the Jain-Krishna model is not immediately possible 
since (a) we have a directed graph in this context, (b) we do not allow for loops, 
(c) we add not one but potentially many edges at each time step, and (d) we do not 
generate the graph completely but only re-sample one vertex at each time step. 

Scalings with leading-order term $\cO(d)$ as 
in~\eqref{eq:scale1G}-\eqref{eq:scale2G} also hold for directed 
graphs~\cite{BollobasRasmussen}, which means we can conjecture that (a) does not
play a crucial role. The case of loops is also excluded in the directed context
in~\cite{BollobasRasmussen} not affecting the scaling so (b) can also be disregarded.
The problem (c) is essentially just a time re-scaling for large graphs as instead 
of counting one edge at a time, we add a certain average number of edges $2p(d-1)$
at once. However, the problem (d) could be substantial as we have a lot 
more edges available already as we do not start from an empty graph but this 
should be covered by having a very sparse graph upon using $dp<1$. 

\subsection{Generating Large ACS}
\label{ssec:largerACS}

Now suppose $\cG[0]=\cG$ has at least one cycle $\cY\subseteq \cG$ and suppose 
$\cK_*$ is non-empty. Obviously, the cycle only consists of edges in $\cI_*$ 
so it will not be destroyed by the JK update as long as $\cK_*$ 
is non-empty. We are now interested in, how long it is going to take to form
one large ACS $\cZ$. As before, we expect that the size of $dp$ is crucial to 
distinguish several cases. Again, we restrict to the undirected case and 
ask the simpler question, how long it is going to take to form one large 
cycle.\medskip

\textbf{Conjecture 3: If $dp>1$ and we start from a typical cycle, then the 
formation of a single ACS according to the JK update requires at most $\cO(1)$ 
time steps independent of any other parameters.}\medskip

The reasoning for this conjecture is that there will be relatively large cycles
in the case of a giant component and then just the same reasoning as for 
Conjecture 1 applies, so we shall not discuss the reasoning in additional detail. 
As before, the interesting case seems to be $dp<1$.\medskip 

\textbf{Conjecture 4: If $dp<1$ is fixed and $d\ra +\I$, the mean waiting time 
until an ACS has formed containing all vertices is $\cO(d)$.}\medskip

Again we give some heuristics to motivate the conjecture. Suppose our current
ACS $\cZ[0]=\cZ$ in $\cG[0]$ has $k$ vertices. To attach a vertex $v$ via the 
JK update requires the generation of at least one edge from 
$\cZ$ to $v$. The probability of this event is
\benn
\sum_{i=1}^k\left(\begin{array}{c}k \\i\end{array}\right) p^i(1-p)^{k-i}=
1-(1-p)^k=:r(k,p)=r.
\eenn 
Therefore, we have for the mean waiting time $\tau_k$ until the update 
rule has attached a vertex $v$ that is repeatedly selected
\benn
\E[\tau_k]=\sum_{j=1}^\I jr(1-r)^{j-1}=r\sum_{j=1}^\I j (1-r)^{j-1}
=\frac{r}{(1-(1-r))^2}=\frac{1}{r},
\eenn
where the usual differentiation of the geometric series has been used. Now
summing over all the waiting times gives the total waiting time 
\benn
\sum_{k=1}^d \E[\tau_k]=\sum_{k=1}^d \frac{1}{1-(1-p)^k}
\eenn
The largest term in this sum is actually the first one as one 
would intuitively expect as it is easier to attach to an ACS if the ACS is
already large. However, there is also the effect of $d$, particularly for
large graphs. We just calculate
\beann
\sum_{k=1}^d \frac{1}{1-(1-p)^k} &\approx& \int_1^d \frac{1}{1-(1-p)^x}~\txtd x\\
&=& \int_1^d \frac{1}{1-\txte^{x\ln (1-p)}}~\txtd x = \left.
x-\frac{1-\txte^{x\ln(1-p)}}{\ln(1-p)}\right|_{x=1}^d\\
&=& d-1 - \frac{1-(1-p)^d}{\ln(1-p)}  + \frac{p}{\ln(1-p)} 
\eeann
So if $p$ is small and $d$ is large while $dp<1$ is fixed in the 
asymptotic limit $p\ra 0$, we easily check using L'H{\^o}pital's rule that the last 
expression diverges like $1/p$ as $p\ra 0$. Since $\theta=pd<1$ is fixed, we 
have now motivated our Conjecture 4 as the divergence is $\cO(d)$ as $d\ra +\I$. 
Therefore, it takes very long to really form a full ACS activating every vertex, even 
if we start with a cycle already in the first graph. Yet, it does not take as long 
as forming cycles in the first place since we essentially form an ACS in a more 
deterministic way by judiciously eliminating vertices not part of an ACS at 
each step. 

\section{Summary and Outlook}

In this work, we have analyzed the two singular limits of the Jain-Krishna model
for adaptive catalytic networks. One limit consists in freezing the dynamics
of the network working on a fixed graph. In this case, we have proven the existence 
of stationary solutions for the ODE vertex dynamics. We have characterized the equilibria 
via Perron-Frobenius vectors, including a distinction between different cycle structures 
and autocatalytic sets of the underlying graph. Then we 
have rigorously proven that the dynamics of the catalytic ODEs converges (up to 
initial conditions of measure zero) to a stationary solution. The proof uses the insight
that working in a projective space is the correct mathematical setting and it uses
an intertwining lemma, which yields an infinite-dimensional system of ODEs including 
the graph structure so that the dynamics can be treated hierarchically. Then we studied
the second singular limit, where the vertex dynamics are assumed to be infinitely fast,
yet the dynamics of the network yields a changing graph. In this context, we formulated
four conjectures, which we justified with formal asymptotic calculations for the two
parameters $d$ (number of vertices) and $p$ (connection probability between vertices). 
More precisely there seems to be a substantial distinction for the Jain-Krishna model 
between the case, when the graph has a giant component ($dp>1$) and when it does not 
($dp<1$). More precisely, in the giant component case, we conjectured that the formation 
of an initial cycle, as well as the formation of an autocatalytic set, are of orders
$\cO(d^2)$ and $\cO(d)$ respectively as $d\ra +\I$. In the case without a giant 
component, these events are conjectured to occur typically at order $\cO(1)$. 

In summary, we can conclude that for the non-giant component case $dp<1$, the dynamics
is a true multiple time scale dynamical system: the ODE convergence of the vertex dynamics 
to equilibrium is generically exponentially fast, and the network is slowly driven by the 
adaptation of the graph at a very slow time scale of at least $\cO(d)$. Yet, if we are 
very close to, or even have $dp>1$, we expect a mixing of the time scales.\medskip 

The critical regime $dp\approx 1$ seems to
be particularly interesting as there could be links to self-organized criticality (SOC) in
networks~\cite{BornholdtRohlf,ChristensenDonangeloKoillerSneppen,PaczuskiBasslerCorral},
where time-scale separation again plays a crucial role~\cite{KuehnNetworks}. SOC exploits
that the network is near a topologically critical point, e.g., being marginally connected,
to enhance information processing. From the viewpoint of applications, one may hence
infer that it could be beneficial to study the Jain-Krishna model also very close to the
critical regime $dp\approx 1$, where it can change from a multiple time scale network
to a more single-scale structure between ODE vertex dynamics and network adaption.\medskip

For the regime $dp<1$, it seems to be a natural next step to exploit the scale
separation to make predictions about the dynamics. For example, we know that an
autocatalytic set (ACS) can form only very slowly. The ACS can only be destroyed 
once it encompasses the entire graph, which means that we may be able to 
derive early-warning signs for drastic sudden transitions in such networks. Indeed,
this has been shown to be possible in several other adaptive network models already~\cite{KuehnZschalerGross,HorstmeyerKuehnThurner,Cavaliereetal,JaegerHoferKapellerFuellsack}.\medskip

From the standpoint of rigorous mathematical analysis, it seems plausible that it
might be possible to give a fully rigorous analysis of certain patterns observed
in time series of the Jain-Krishna model. At least for the case $dp<1$ one may
use the equilibrium point results we proved in Section~\ref{sec:cts} and try to 
establish Conjectures 2 \& 4. Then another step employing (singular) perturbation 
theory should yield the existence of long phases of slow network adaptation towards 
an ACS followed by a sudden collapses, which is the main phenomenon exhibited by 
the Jain-Krishna model. The work presented here provides at least a first step in
this, potentially very involved, path towards a complete mathematical analysis.

\appendix 

\section{An Inconsistent Model Definition}
\label{ap:bug}

In this appendix, we prove that the definition of the model in the 
paper~\cite{JainKrishna3} is incorrect. We briefly repeat the definition
from~\cite{JainKrishna3}. Given $C\in\R^{d\times d}$, we start by sampling 
an interaction matrix $C=(c_{ij})$ so that 
\benn
\P(c_{ij}\neq 0)=p\qquad\text{and} \qquad \P(c_{ij}= 0)=1-p 
\eenn
for some fixed $p\in(0,1)$. Then the entry $c_{ij}$ is determined by 
\emph{sampling uniformly from $[-1,1]$ if $i\neq j$ and from $[-1,0]$ 
if $i=j$}, which is the first main difference to 
Section~\ref{sec:JKmodel}. Then we set as above
\be
\label{eq:f}
f_i:=(Cx)_i-x_i\sum_{k=1}^d (Cx)_k
\ee
but define the ODEs for $x=x(t)\in\R^d$ as
\be
\label{eq:ODE}
x_i'=\left\{
\begin{array}{ll}
f_i & \text{if $x_i>0$ or $f_i\geq 0$,}\\
0 & \text{if $x_i=0$ and $f_i< 0$.} 
\end{array}
\right.
\ee
Furthermore, we impose the constraints
\be
\label{eq:constraints}
0\leq x_i\leq 1,\qquad \sum_{i=1}^d x_i=1.
\ee
defining the phase space $\cX$.

\begin{thm}
The model definition~\eqref{eq:f}-\eqref{eq:constraints} is consistent
in the interior $\textnormal{int}(\cX)$, yet it is inconsistent, i.e., 
not solvable, once points on $\partial \cX$ are considered.
\end{thm}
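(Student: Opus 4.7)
The theorem splits naturally into two independent assertions, interior consistency and boundary inconsistency, which I would prove separately. The interior case is a routine check that the piecewise definition~\eqref{eq:ODE} collapses to a smooth ODE and preserves the constraints~\eqref{eq:constraints} infinitesimally. The boundary case is a negative statement, so the cleanest route is to exhibit an explicit pair $(C,x)$ with $x\in\partial\cX$ for which no choice of $x'$ consistent with~\eqref{eq:ODE} preserves both $x_i\geq 0$ and $\sum_i x_i=1$ simultaneously.

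\textbf{Interior case.} On $\textnormal{int}(\cX)$ every component satisfies $x_i>0$, so only the first branch of~\eqref{eq:ODE} is triggered and the system reduces to $x'=f(x)$, a smooth polynomial vector field. The mass-conservation computation from Proposition~\ref{prop:invariant} then applies verbatim:
\benn
\sum_{i=1}^d x_i'=\sum_{i=1}^d f_i=\left(1-\sum_{i=1}^d x_i\right)\sum_{k=1}^d (Cx)_k=0
\eenn
whenever $|x|_1=1$. Picard--Lindel\"of supplies local existence and uniqueness, and the trajectory remains on the affine hyperplane $\{|x|_1=1\}$ as long as it stays in the interior, so the model is well-posed there.

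\textbf{Boundary case.} The structural issue is that entries of $C$ may now be negative, so the invariance argument of Proposition~\ref{prop:invariant} fails: at a point with $x_j=0$ the component $(Cx)_j$ can be strictly negative, hence the vector field $f$ may point outward. The proposed remedy of truncating $x_j'$ to $0$ whenever $x_j=0$ and $f_j<0$ destroys the identity $\sum_i x_i'=0$ and therefore cannot coexist with the mass constraint. I would make this concrete with $d=2$, $x=(1,0)^\top\in\partial\cX$ and
\benn
C=\begin{pmatrix}-\tfrac12 & 0 \\ -\tfrac13 & 0\end{pmatrix},
\eenn
whose entries respect the sampling ranges $[-1,0]$ on the diagonal and $[-1,1]$ off it. A short calculation yields $(Cx)_1=-\tfrac12$, $(Cx)_2=-\tfrac13$, $|Cx|_1=-\tfrac56$, and therefore $f_1=\tfrac13$, $f_2=-\tfrac13$. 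Since $x_2=0$ and $f_2<0$, the second branch of~\eqref{eq:ODE} forces $x_2'=0$, while $x_1'=f_1=\tfrac13$, so $\tfrac{\txtd}{\txtd t}\sum_i x_i=\tfrac13\neq 0$, violating $|x|_1=1$. If instead one insists on mass conservation and uses $x_2'=f_2=-\tfrac13$, then $x_2$ becomes negative, violating $x_i\geq 0$. Either way the constraints~\eqref{eq:constraints} are incompatible with~\eqref{eq:ODE} on $\partial\cX$.

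\textbf{Main obstacle.} The interior argument and the dimensional reduction of the counterexample are mechanical; the conceptual point is to verify that the inconsistency is not an artifact of a single badly chosen example but a robust feature of the model. I would therefore close by remarking that \emph{any} boundary point at which $|Cx|_1\neq 0$ and some $f_j<0$ reproduces the same contradiction, so the obstruction has codimension zero inside $\partial\cX$ for generic $C$ with negative entries, rather than being a measure-zero pathology that might be repaired by a small perturbation of the sampling scheme.
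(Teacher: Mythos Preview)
Your proposal is correct and follows essentially the same approach as the paper: both verify interior consistency via the mass-conservation identity $\sum_i f_i=0$ on $\{|x|_1=1\}$, and both trace boundary inconsistency to the fact that truncating $x_r'$ to zero when $x_r=0$ and $f_r<0$ leaves $\sum_{i\neq r} f_i=-(Cx)_r\neq 0$. The only cosmetic difference is that you exhibit a concrete $d=2$ numerical counterexample, whereas the paper carries out the general computation and then appeals to genericity of $C$; your closing remark about codimension zero captures exactly that genericity.
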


\begin{proof}
Let us start with the interior, there the proof is easy since we only have
to check whether the conservation of mass
\be
\label{eq:mass}
\sum_{j=1}^d x_j=1
\ee
is consistent with~\eqref{eq:ODE}. Indeed, this is easy since differentiating
\eqref{eq:mass} we get
\benn
0=\frac{\txtd}{\txtd t}\left(\sum_{j=1}^d x_j\right)=\sum_{j=1}^d f_j
\eenn
as we are in the interior. So we find
\benn
0=\sum_{j=1}^d f_j = \sum_{j=1}^d(Cx)_j-\underbrace{\sum_{j=1}^d x_j}_{=1}
\sum_{k=1}^d (Cx)_k= \sum_{j=1}^d(Cx)_j-\sum_{k=1}^d (Cx)_k=0.
\eenn
Hence, the model is consistent in the sense that the \emph{physical 
conservation law is also enforced by the ODEs themselves}. This means 
we actually do not really need the conservation law in 
$\textnormal{int}{\cX}$. However, on $\partial\cX$
the same calculation fails. Consider a point with $x_r=0$ and $x_j>0$ for 
$j\neq r$ with $f_r<0$; it is easy to see that we can reach such a point in 
finite time for an open set of matrices $C$, i.e., this is not 
a special case. Now we compute 
\beann
0&=&\sum_{j=1}^d x_j'= \sum_{j=1,j\neq r}^d f_j\\
&=& \sum_{j=1,j\neq r}^d(Cx)_j-\underbrace{\sum_{j=1,j\neq r}^d x_j}_{=1}
\sum_{k=1}^d (Cx)_k = 
\sum_{j=1,j\neq r}^d(Cx)_j-\sum_{k=1}^d (Cx)_k
\eeann
and the last expression on the right-hand side is in general not equal to zero
for some open set of possible matrices $C$. 
\end{proof}

In fact, one can easily see that there are cases, where trajectories
leave $\cX$, i.e., the ODEs are inconsistent with the mass constraint. 
Hence, one should follow the model definitions 
in \cite{JainKrishna,JainKrishna1} instead of~\cite{JainKrishna3}.
 

\end{document}